\DeclareFontFamily{T1}{pzc}{}
\DeclareFontShape{T1}{pzc}{m}{it}{1.8 <-> pzcmi8t}{}
\DeclareMathAlphabet{\mathpzc}{T1}{pzc}{m}{it}
\theoremstyle{plain}
\newtheorem{prop}{Proposition}[section]
\newtheorem{cor}[prop]{Corollary}
\newtheorem{thm}[prop]{Theorem}
\newtheorem{theorem}[prop]{Theorem}
\newtheorem{lemma}[prop]{Lemma}
\theoremstyle{definition}
\newtheorem{defn}[prop]{Definition}
\newtheorem{rem}[prop]{Remark}
\theoremstyle{definition}
\newtheorem{definition}[prop]{Definition}
\newtheorem{remark}[prop]{Remark}
\numberwithin{equation}{section}
\DeclareMathOperator{\Dom}{\mathrm{Dom}}              
\newcommand{\vertiii}[1]{{\left\vert\kern-0.25ex\left\vert\kern-0.25ex\left\vert #1
    \right\vert\kern-0.25ex\right\vert\kern-0.25ex\right\vert}}
\newcommand{\Ga}{\Gamma}                     
\newcommand{\Coo}{C^\infty}                  
\newbox\ncintdbox \newbox\ncinttbox 
\newcommand{\Id}{\mathrm{Id}}                
\newcommand{\A}{\mathcal{A}}                 
\newcommand{\E}{\mathcal{E}}                 
\newcommand{\C}{\mathbb{C}}                  
\renewcommand{\H}{\mathcal{H}}               
\newcommand{\hookto}{\hookrightarrow}        
\newcommand{\om}{\omega}                     
\newcommand{\T}{\mathbb{T}}                  
\newcommand{\sA}{\mathcal{A}}       
\newcommand{\sS}{\mathcal{S}}       
\newcommand{\Om}{\Omega}       
\newcommand{\Th}{\Theta}          
\def\<#1|#2>{\langle#1\stroke#2\rangle} 
\def\?#1|#2?{\{#1\stroke#2\}}        
\def\<#1,#2>{\langle#1,#2\rangle}            
\def\ee_#1{e_{{\scriptscriptstyle#1}}}       
\def\wick:#1:{\mathopen:#1\mathclose:}       
\newbox\ncintdbox \newbox\ncinttbox 
\newcommand{\stroke}{\mathbin|}   
\newcommand{\End}{\mathrm{End}}       
\newcommand{\Hom}{\mathrm{Hom}}       
\newcommand{\be}{\begin{equation}}
\renewcommand{\ee}{\end{equation}}
\newcommand{\bea}{\begin{eqnarray}}
\newcommand{\eea}{\end{eqnarray}}
\newcommand{\bean}{\begin{eqnarray*}}
	\newcommand{\eean}{\end{eqnarray*}}
\newcommand{\brray}{\begin{array}}
	\newcommand{\erray}{\end{array}}
\title{Finite Nocommutative Coverings \\and \\ Flat Connections}
\begin{document}
\maketitle  \setlength{\parindent}{0pt}
\begin{center}
\author{
{\textbf{Petr R. Ivankov*}\\
e-mail: * monster.ivankov@gmail.com \\
}
}
\end{center}

\vspace{1 in}

\noindent

\paragraph{}	

Any flat connection on a principal fibre bundle comes from a linear representation of the fundamental group. The noncommutative analog of this fact is discussed here. 

\section{Motivation. Preliminaries}
\subsection{Coverings}
\begin{defn}\label{comm_cov_pr_defn}\cite{spanier:at}
	Let $\widetilde{\pi}: \widetilde{\mathcal{X}} \to \mathcal{X}$ be a continuous map. An open subset $\mathcal{U} \subset \mathcal{X}$ is said to be {\it evenly covered } by $\widetilde{\pi}$ if $\widetilde{\pi}^{-1}(\mathcal U)$ is the disjoint union of open subsets of $\widetilde{\mathcal{X}}$ each of which is mapped homeomorphically onto $\mathcal{U}$ by $\widetilde{\pi}$. A continuous map $\widetilde{\pi}: \widetilde{\mathcal{X}} \to \mathcal{X}$ is called a {\it covering projection} if each point $x \in \mathcal{X}$ has an open neighborhood evenly covered by $\widetilde{\pi}$. $\widetilde{\mathcal{X}}$ is called the {
		\it covering space} and $\mathcal{X}$ the {\it base space} of the covering.
\end{defn}
\begin{defn}\label{cov_proj_cov_grp}\cite{spanier:at}
	Let $p: \mathcal{\widetilde{X}} \to \mathcal{X}$ be a covering.  A self-equivalence is a homeomorphism $f:\mathcal{\widetilde{X}}\to\mathcal{\widetilde{X}}$ such that $p \circ f = p$. This group of such homeomorphisms is said to be the {\it group of covering transformations} of $p$ or the {\it covering group}. Denote by $G\left( \mathcal{\widetilde{X}}~|~\mathcal{X}\right)$ this group.
\end{defn}


\begin{rem}
	Above results are copied from \cite{spanier:at}. Below  the \textit{covering projection} word is replaced with \textit{covering}.
	
\end{rem}

\subsection{Flat connections in the differential geometry}\label{geom_flat_subsec}
\paragraph*{}
Here I follow to \cite{koba_nomi:fgd}. Let $M$ be a manifold and $G$ a Lie group. A (\textit{differentiable}) \textit{principal bundle over M with group} $G$ consists of a manifolfd $P$ and an action of $G$ on $P$ satisfying the following conditions:
\begin{enumerate}
	\item [(a)] $G$ acts freely on $P$ on the right: $\left(u, a \right) \in P \times G \mapsto ua = R_au \in P$;
	\item[(b)] $M$ is the quotient space of $P$ by the equivalence relation induced by $G$, i.e. $M = P/G$, and the canonical projection $\pi: P \to M$ is differentiable;
	\item[(c)] $P$ is locally trivial, that is, every point $x$ of $M$ has an open neighborhood $U$ such that $\pi^{-1}\left( U\right)$ is isomophic to  $U\times G$ in the sense that there is a diffeomorphism $\psi:  \pi^{-1}\left( U\right) \to U \times G$ such that $\psi\left( u\right) = \left( \pi\left( u\right), \varphi\left(u \right) \right) $ where $\varphi$ is a mapping of $\pi^{-1}\left(U \right)$ into $G$ satisfying  $\psi\left(ua \right)= \left( \psi\left( u\right)\right) a$  for all $u \in \pi^{-1}\left(U \right)$ and $a \in G$. 
\end{enumerate}
A principal fibre bundle will be denoted by $P\left( M, G, \pi\right), ~ P\left(M, G \right)$ or simply $P$.
\paragraph*{} Let $P\left(M, G \right)$ be a principal fibre bundle over a manifold with group $G$. For each $u \in P$ let $T_u\left(P \right)$ be a tangent space of $P$ at $u$ and $G_u$ the subspace of $T_u\left( P\right)$ consisting of vectors tangent to the fibre through $u$. A \textit{connection} $\Ga$   in $P$ is an assignment of a subspace $Q_u$ of $T_u\left(P \right)$ to each $u \in P$ such that
\begin{enumerate}
	\item [(a)] $T_u\left(P \right) = G_u \oplus Q_u$ (direct sum);
	\item[(b)] $Q_{ua}= \left(R_a \right)_*Q_u$ for every $u \in P$ and $a \in G$, where $R_a$ is a transformation of $P$ induced by $a \in G, ~ R_au=ua$.
\end{enumerate}
\paragraph*{}
Let $P = M \times G$ be a trivial principal bundle. For each $a \in G$, the set $M \times \{a\}$ is a submanifold of $P$. The \text{canonical flat connection} in $P$ is defined by taking the tangent space to $M \times \{a\}$ at $u = \left(x, a \right)$ as the horizontal tangent subspace at $u$. A connection in any principal bundle is called \textit{flat} if every point has a neighborhood such that the induced connection in $P|_U = \pi^{-1}\left(U \right)$ is isomorphic with the canonical flat connection.
\begin{cor}\label{dg_flat_cor}(Corollary II 9.2 \cite{koba_nomi:fgd})
	Let $\Ga$ be a connection in $P\left(M, G \right)$ such that the curvature vanishes identically. If $M$ is paracompact and simply connected, then $P$ is isomorphic to the trivial bundle and $\Ga$ is isomorphic to the canonical flat connection in $M \times G$.
\end{cor}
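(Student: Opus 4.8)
The plan is to turn the hypothesis $\Om = 0$ into integrability of the horizontal distribution and then to use the simple connectivity of $M$ to globalise a single horizontal leaf into a trivialising section. The conceptual point is that, for a flat connection, parallel transport depends only on the homotopy class of the path, so on a simply connected base it depends on the endpoints alone; this is what ultimately forces the holonomy to be trivial and the bundle to split.

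First I would record that vanishing curvature is exactly the Frobenius integrability condition for the assignment $u \mapsto Q_u$: since the curvature form measures the failure of the bracket of two horizontal fields to be horizontal, $\Om = 0$ makes $\{Q_u\}$ an involutive, hence integrable, distribution on $P$ (this is the content of the theorem preceding the corollary in \cite{koba_nomi:fgd}). By the Frobenius theorem $P$ is then foliated by maximal connected horizontal integral submanifolds; fix $u_0 \in P$ and let $L$ be the leaf through $u_0$.

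Next I would study $\pi|_L \colon L \to M$. As every tangent space to $L$ is horizontal and $\pi_*$ restricts to an isomorphism $Q_u \xrightarrow{\sim} T_{\pi(u)}M$, the map $\pi|_L$ is a local diffeomorphism. Combining the local triviality of $P$ with the local flatness of $\Ga$, over a trivialising, simply connected neighbourhood the connection is the canonical flat one, so the leaf splits there into sheets each carried diffeomorphically onto the neighbourhood; this exhibits $\pi|_L$ as a covering in the sense of Definition \ref{comm_cov_pr_defn}, with surjectivity following from the connectedness of $M$ and the lifting property discussed below. Since $M$ is paracompact and simply connected and $L$ is connected, this covering has a single sheet, i.e. $\pi|_L$ is a diffeomorphism. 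Its inverse $\sigma \eqdef (\pi|_L)^{-1}\colon M \to P$ is a global horizontal section, and $\Phi\colon M \times G \to P$, $\Phi(x,a) = \sigma(x)\,a$, is an isomorphism of principal bundles. Finally, condition (b) in the definition of a connection gives that the horizontal space at $\sigma(x)a$ equals $(R_a)_*\sigma_*(T_xM)$, which is precisely the horizontal subspace of the canonical flat connection of $M \times G$ at $(x,a)$; hence $\Phi$ carries $\Ga$ to the canonical flat connection.

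I expect the main obstacle to be the covering-map step: upgrading $\pi|_L$ from a local diffeomorphism to a genuine covering projection. This rests on the global horizontal lifting of curves---every path in $M$ must admit a horizontal lift defined on its entire domain, with the lift depending continuously on the path---and on a uniformity argument over evenly trivialised neighbourhoods. Once this lifting property is secured, simple connectivity collapses the covering to a diffeomorphism and the remaining verifications are routine.
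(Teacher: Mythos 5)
This statement is quoted by the paper verbatim from Kobayashi--Nomizu (Corollary II~9.2) and the paper itself supplies no proof, so the comparison is really with the source's proof. Your argument is correct, but it follows the foliation-theoretic route rather than the holonomy-theoretic one used in \cite{koba_nomi:fgd}: there, the corollary is deduced from the holonomy machinery --- vanishing curvature kills the Lie algebra of the holonomy group (Ambrose--Singer, Theorem II~8.1), so the restricted holonomy group $\Phi^0(u_0)$ is trivial; simple connectedness forces $\Phi(u_0)=\Phi^0(u_0)=\{e\}$ via the surjection $\pi_1(M)\to\Phi(u_0)/\Phi^0(u_0)$; and the Reduction Theorem (which is where paracompactness is genuinely used, to make the holonomy bundle a differentiable reduction) then exhibits the holonomy bundle $P(u_0)\to M$ as a diffeomorphism whose inverse is a horizontal section trivialising $P$ and $\Ga$. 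The two proofs are closely related --- your maximal leaf $L$ through $u_0$ is exactly the holonomy bundle $P(u_0)$, since both consist of the points joined to $u_0$ by horizontal curves --- but they package the work differently: yours is self-contained modulo Frobenius and elementary covering-space theory (and, as a bonus, makes clear that paracompactness plays no essential role beyond standard manifold conventions), while Kobayashi--Nomizu's fits the corollary into their general holonomy framework at the cost of invoking Theorems II~4.2, II~7.1 and II~8.1. Your handling of the step you flag as the main obstacle is also sound: once you know (from the local statement of Theorem II~9.1, or directly from involutivity) that over a connected trivialising neighbourhood $U$ the horizontal leaves are the slices $U\times\{a\}$, maximality of $L$ gives $L\cap\pi^{-1}(U)=\bigsqcup_{a\in S}U\times\{a\}$ for some $S\subset G$, which is precisely the evenly-covered condition of Definition~\ref{comm_cov_pr_defn}, and surjectivity follows since the image of $\pi|_L$ is then both open and closed in the connected base $M$.
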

\paragraph*{}

 If $\widetilde{\pi}: \widetilde{M} \to M$ is a covering then the $\widetilde{\pi}$-\textit{lift} of $P$ is a principal $\widetilde{P}\left(\widetilde{M}, G \right)$  bundle, given by
\be\nonumber
\widetilde{P} = \left\{\left(u, \widetilde{x}\right) \in P \times \widetilde{M}~|~ \pi\left(u \right) = \widetilde{\pi}\left( \widetilde{x}\right) \right\}.
\ee
 If $\Ga$ is a  connection on $P\left( M, G\right)$ and $\widetilde{M} \to M$ is a covering then is a canonical connection $\widetilde{\Ga}$ on $\widetilde{P}\left(\widetilde{M}, G \right)$ which is the \textit{lift} of $\Ga$, that is, for any $\widetilde{u} \in \widetilde{P}$ the horizontal space $\widetilde{Q}_{\widetilde{u}}$ is isomorphically  mapped onto the horizontal space $Q_{\widetilde{\pi}\left(\widetilde{u} \right) }$ associated with the connection $\Ga$.
If $\Ga$ is flat then from the Proposition (II 9.3 \cite{koba_nomi:fgd}) it turns out that there is a covering $\widetilde{M} \to M$ such that $\widetilde{P}\left(\widetilde{M}, G \right)$ (which is the lift of $P\left(M,G\right)$) is a trivial bundle, so the lift $\widetilde{\Ga}$ of $\Ga$ is a canonical flat connection (cf. Corollary \ref{dg_flat_cor}). From the the Proposition (II 9.3 \cite{koba_nomi:fgd}) it follows that for any flat connection $\Ga$  on $P\left(M, G \right)$ there is a group homomorphism $\varphi: G\left( \widetilde{M} ~|~ M\right) \to G$ such that
\begin{enumerate}
	\item [(a)] There is an action $G\left( \widetilde{M} ~|~ M\right) \times \widetilde{P} \to \widetilde{P} \approx \widetilde{M} \times G$ given by
	$$
	g \left(\widetilde{x}, a \right) = \left( g\widetilde{x}, \varphi\left( g\right) a\right); \forall \widetilde{x} \in \widetilde{M}, ~ a \in G, 
	$$
	\item[(b)] There is the canonical diffeomorphism  $P = \widetilde{P}/G\left( \widetilde{M} ~|~ M\right)$,
	\item[(c)] The lift $\tilde{\Ga}$ of $\Ga$ is a canonical flat connection.
\end{enumerate}

\begin{defn}
In the above situation we say that the flat connection $\Ga$ is \textit{induced} by the covering $\widetilde{M}\to M$ and the homomorphism $G\left( \widetilde{M}~|~M\right) \to G$, or we say that $\Ga$ \textit{comes from} $G\left( \widetilde{M}~|~M\right) \to G$.
\end{defn}
\begin{remark}
	The  Proposition (II 9.3 \cite{koba_nomi:fgd}) assumes that $\widetilde{M} \to M$ is the universal covering however it is not always necessary requirement.
\end{remark}
\begin{remark}
	If $\pi_1\left(M, x_0 \right)$ is the fundamental group \cite{spanier:at} then there is the canonical surjective homomorphism $\pi_1\left(M, x_0 \right) \to G\left( \widetilde{M}~|~M\right)$. So there exist the composition $\pi_1\left(M, x_0 \right) \to G\left( \widetilde{M}~|~M\right) \to G$. It follows that any flat connection comes from the homomorphisms $\pi_1\left(M, x_0 \right) \to G$. 
\end{remark} 
 \paragraph*{}
 Suppose that there is the right action of $G$ on $P$ and suppose that $F$ is a manifold with the left action of $G$. There is an action of $G$ on $P \times F$ given by $a\left( u, \xi\right) = \left(u a, a^{-1}\xi \right)$ for any $a \in G$ and $\left( u, \xi\right) \in P\times F$. The quotient space $P \times_G F = \left(P \times F \right)/G$ has the natural structure of a manifold and if $E =  P \times_G F$ then $E\left(M, F, G, P \right)$ is said to be the \textit{fibre bundle over the base $M$, with (standard) fibre $F$, and (structure) group G which is associated with the principal bundle P} (cf. \cite{koba_nomi:fgd}). If $P = M \times G$ is the trivial bundle then $E$ is also trivial, that is, $E = M \times F$. If $F = \C^n$ is a vector space and the action of $G$ on $\C^n$ is a linear representation of the group then $E$ is the linear bundle. Denote by $T\left(M \right)$ (resp. $T^*\left(M \right)$) the tangent  (resp. contangent) bundle, and denote by $\Ga\left( E\right)$, $\Ga\left(T\left(M \right)\right)$, $\Ga\left(T^*\left(M \right)\right)$ the spaces of sections of $E$, $T\left(M \right)$, $T^*\left(M \right)$ respectively. Any connection $\Ga$ on $P$ gives a covariant derivative  on $E$, that is,
for any section  $X \in \Ga\left( T\left(M \right)\right) $ and any section $\xi \in \Ga\left( E\right)$ there is the derivative given by
\be\nonumber
 \nabla_X\left( \xi\right) \in \Ga\left( E\right).
\ee
 If $E = M \times \C^n$, $\Ga$ is the canonical flat connection and $\xi$ is a trivial section, that is, $\xi = M \times \{x\}$ then 
\be\label{comm_triv_eqn}
\nabla_X \xi = 0,~~ \forall X \in T\left(M \right).
\ee
For any connection there is the unique map
\be\label{comm_alg_conn}
\nabla : \Ga\left( E\right) \to \Ga\left( E \otimes T^*\left( M\right) \right)
\ee
such that
\be\nonumber
\nabla_X \xi = \left(\nabla \xi, X \right)
\ee
where the pairing $\left(\cdot, \cdot\right) : \Ga\left( E \otimes T^*\left( M\right) \right) \times \Ga\left( T\left( M\right)\right)  \to \Ga\left( E \right) $ is induced by the pairing $\Ga\left( T^*\left(M \right)\right)   \times \Ga\left( T\left( M\right)\right)  \to \Coo\left(M \right) $.

\subsection{Noncommutative generalization of connections}
\paragraph*{} The noncommutative analog of manifold is a spectral triple and there is the noncommutative analog of connections.

\subsubsection{Connection and curvature}
\begin{defn}\cite{connes:ncg94}
	\begin{enumerate}
		\item [(a)]  A \textit{cycle} of dimension $n$ is a triple $\left(\Om, d, \int \right)$ where $\Om = \bigoplus_{j=0}^n\Om^j$  is a graded algebra over $\C$, $d$ is a graded derivation of degree 1 such that $d^2=0$, and $\int :\Om^n \to \C$ is a closed graded trace on $\Om$,
		\item[(b)] Let $\A$  be an algebra over $\C$. Then a \textit{cycle over} $\A$ is given by a cycle $\left(\Om, d, \int \right)$	and a homomorphism $\A \to \Om^0$.
	\end{enumerate}
\end{defn}
\begin{defn}\label{conn_defn}\cite{connes:ncg94}
	Let $\A\xrightarrow{\rho} \Om$ be a cycle over $\A$, and $\E$ a finite projective module over $\A$.
	Then a \textit{connection} $\nabla$ on $\E$ is a linear map  $\nabla: \E \to \E \otimes_{\A} \Om^1$ such that
	\be\label{conn_prop_eqn}
	\nabla\left(\xi x \right) =  \nabla\left(\xi \right) x =  \xi \otimes d\rho\left(x \right) ; ~ \forall \xi \in \E, ~ \forall x \in \A.
	\ee
\end{defn}
Here $\E$ is a right module over $\A$ and $\Om^1$ is considered as a bimodule over $\A$.
\begin{rem}
	The map $\nabla: \E \to \E \otimes_{\A} \Om^1$ is an algebraic analog of the map $\nabla : \Ga\left( E\right) \to \Ga\left( E \otimes T^*\left( M\right) \right)$ given by \eqref{comm_alg_conn}. 
\end{rem}

\begin{prop}\label{conn_prop}\cite{connes:ncg94}
	Following conditions hold:	
	\begin{enumerate} 
		\item[(a)] 	Let $e \in \End_{\A}\left( \E\right)$ be an idempotent and $\nabla$ is a connection on $\E$; then 
	\be\label{idem_conn}
		\xi \mapsto \left(e \otimes 1 \right) \nabla \xi
	\ee
		is a connection on $e\E$,
		\item[(b)] Any finite projective module $\E$ admits a connection,
		\item[(c)]  The space of connections is an affine space over the vector space $\Hom_{\sA}\left(\E, \E \otimes_{\A} \Om^1 \right)$, 
		\item[(d)] Any connection $\nabla$ extends uniquely up to a linear map of  $\widetilde{\mathcal E}= \mathcal E \otimes_{\A} \Om$ into itself
		such that
	\be
	\nabla\left(\xi \otimes \om \right) = \nabla\left(\xi \right) \om + \xi \otimes d\om; ~~\forall \xi \in \mathcal E, ~ \om \in \Om. 
	\ee
	\end{enumerate}
\end{prop}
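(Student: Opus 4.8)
The plan is to prove the four assertions in the order (a), (b), (c), (d), since (b) invokes (a) while (c) and (d) are formal consequences of the connection identity \eqref{conn_prop_eqn}, which I write in Leibniz form as $\nabla(\xi x)=\nabla(\xi)\,x+\xi\otimes d\rho(x)$. For (a), I would verify the defining property by direct computation. As $e\in\End_\A(\E)$ is right $\A$-linear, the operator $e\otimes 1$ is well defined on $\E\otimes_\A\Om^1$, projects onto $e\E\otimes_\A\Om^1$, and commutes with right multiplication by $x\in\A$ (which acts through the $\Om^1$ factor). For $\xi\in e\E$ one has $e\xi=\xi$, so applying $e\otimes 1$ to the Leibniz identity yields $(e\otimes 1)\nabla(\xi x)=[(e\otimes 1)\nabla\xi]\,x+\xi\otimes d\rho(x)$, which is precisely the property \eqref{conn_prop_eqn} for the map \eqref{idem_conn} on $e\E$.

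For (b), I would first put the Grassmann connection on the free module $\A^n$: with standard basis $e_1,\dots,e_n$, set $\nabla_0(\sum_i e_i x_i)=\sum_i e_i\otimes d\rho(x_i)$. A short check, using that $\rho$ is an algebra homomorphism and that $d$ is a graded derivation with $\rho(x_i)\in\Om^0$, shows that $\nabla_0$ satisfies \eqref{conn_prop_eqn}. Since $\E$ is finite projective, there is an idempotent $e\in M_n(\A)=\End_\A(\A^n)$ with $\E\cong e\A^n$, and then $(e\otimes 1)\nabla_0$ is a connection on $\E$ by part (a).

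For (c), given two connections $\nabla_1,\nabla_2$, the inhomogeneous terms $\xi\otimes d\rho(x)$ cancel, leaving $(\nabla_1-\nabla_2)(\xi x)=(\nabla_1-\nabla_2)(\xi)\,x$, so $\nabla_1-\nabla_2\in\Hom_\A(\E,\E\otimes_\A\Om^1)$; conversely $\nabla+T$ is again a connection for every such $T$. Since the set of connections is nonempty by (b), it is an affine space modelled on $\Hom_\A(\E,\E\otimes_\A\Om^1)$.

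The genuinely delicate point is (d). On elementary tensors the formula $\widetilde\nabla(\xi\otimes\om)=\nabla(\xi)\,\om+\xi\otimes d\om$ is forced, which secures uniqueness and $\C$-linearity once the map is shown to be well defined; so the real work is to check that the right-hand side respects the relation $\xi a\otimes\om=\xi\otimes\rho(a)\om$ defining $\widetilde{\E}=\E\otimes_\A\Om$. I would expand $\widetilde\nabla(\xi a\otimes\om)$ using the Leibniz rule for $\nabla$, and $\widetilde\nabla(\xi\otimes\rho(a)\om)$ using the graded Leibniz rule $d(\rho(a)\om)=d\rho(a)\,\om+\rho(a)\,d\om$ (valid since $\rho(a)\in\Om^0$), and then reconcile the two by moving the factor $\rho(a)$ across the tensor symbol via the bimodule identities in $\E\otimes_\A\Om$; both expansions reduce to the same three terms. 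The main obstacle is purely bookkeeping: tracking the left and right $\A$-actions through $\rho$ at each tensor symbol. Once the two expansions are seen to coincide, $\widetilde\nabla$ descends to $\widetilde{\E}$, completing the proof.
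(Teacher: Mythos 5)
Your proposal is correct, and all four verifications go through: the compression argument for (a), the Grassmann connection plus idempotent for (b), the cancellation of the inhomogeneous term for (c), and the balancedness check over $\A$ for (d). Note that the paper itself offers no proof of this proposition --- it is quoted from \cite{connes:ncg94} --- so the only meaningful comparison is with the standard argument in Connes's book, which is exactly what you have reproduced; you also, rightly, read the misprinted identity \eqref{conn_prop_eqn} (where an ``$=$'' appears in place of a ``$+$'') as the Leibniz rule $\nabla\left(\xi x \right) = \nabla\left(\xi \right) x + \xi \otimes d\rho\left(x \right)$, without which none of the four statements could be proved.
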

A \textit{curvature} of a connection $\nabla$ is a (right $\A$-linear) map
\be
F_\nabla : \mathcal E \to \mathcal E \otimes_{\A} \Om^2
\ee
defined as a restriction of $\nabla \circ \nabla$ to $\mathcal E$, that is, $F_{\nabla} = \nabla \circ \nabla |_{\mathcal E}$. 
A connection is said to be \textit{flat} if
its curvature is identically equal to $0$ (cf. \cite{brzezinsky:flat_co}).
\begin{remark}
	Above algebraic notions of curvature and flat connection are generalizations of corresponding geometrical notions explained in \cite{koba_nomi:fgd} and the Section \ref{geom_flat_subsec}.
\end{remark}
For any projective $\A$ module $\mathcal E$ there is a \textit{trivial connection}

\bean
\nabla:  \mathcal E \otimes_{\A} \Om \to \mathcal E \otimes_{\A} \Om, \\
\nabla = \Id_{\mathcal E} \otimes d.
\eean
From $d^2 = d \circ d = 0$ it follows that $\left(\Id_{\mathcal E} \otimes d \right) \circ   \left(\Id_{\mathcal E} \otimes d \right)$ = 0, i.e. any trivial connection is flat.
\begin{lemma}\label{flat_rem}
	If $\nabla:  \mathcal E \to \mathcal E \otimes_{\A} \Om^1$ is a trivial connection and $e \in \End_{\A}\left( \E\right)$ is an idempotent then the given by \eqref{idem_conn}
	$$
\xi \mapsto \left(e \otimes 1 \right) \nabla \xi	
	$$
	connection $\nabla_e: e\mathcal E \to e\mathcal E \otimes \Om^1$  on $e\mathcal E$ is flat.
\end{lemma}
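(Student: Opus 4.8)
The plan is to compute the curvature $F_{\nabla_e}=\nabla_e\circ\nabla_e|_{e\E}$ directly and show it is zero. Since the ambient trivial connection is $\nabla=\Id_{\E}\otimes d$ with $d\circ d=0$, we already know that $\nabla\circ\nabla=\Id_{\E}\otimes d^2=0$, so the whole content of the lemma is that passing to the direct summand $e\E$ by $\xi\mapsto(e\otimes 1)\nabla\xi$ creates no curvature. First I would make precise the extension of $\nabla_e$ to $e\E\otimes_{\A}\Om$ furnished by Proposition \ref{conn_prop}(d), and check that this extension coincides with $(e\otimes 1)\circ\nabla$ on all of $e\E\otimes_{\A}\Om$ (both act by $e$ on the module factor and by $d$ on the form factor, and both obey the Leibniz rule of (d), so uniqueness in (d) identifies them). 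Granting this, the curvature takes the compact form $F_{\nabla_e}=(e\otimes 1)\,\nabla\,(e\otimes 1)\,\nabla\big|_{e\E}$.

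The key step is then to commute the middle factor $e\otimes 1$ past $\nabla$. Because $\nabla$ is a graded derivation while $e\otimes 1$ is only right $\A$-linear (it is not $\nabla$-parallel), this commutation is not free: writing $\E\cong\A^n$ (the formula $\Id_\E\otimes d$ presupposes a free module, with $\End_\A(\E)=M_n(\A)$ and $d$ extended entrywise to $M_n(\Om^1)$) one finds
\be\nonumber
\nabla\,(e\otimes 1)=(e\otimes 1)\,\nabla+(de)\wedge(\,\cdot\,),
\ee
where $de\in M_n(\Om^1)$. Substituting this and using idempotency in its two incarnations, namely $(e\otimes 1)^2=e\otimes 1$ and the differentiated identity $(de)\,e+e\,(de)=de$ (whence $e\,(de)\,e=0$), together with $\nabla\circ\nabla=0$, I expect the bookkeeping terms to cancel and $F_{\nabla_e}$ to collapse onto a single residual contribution assembled from $e$, $de$ and $\nabla\xi$.

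The hard part will be disposing of this residual term. Expanding $\nabla\xi=d\xi$ for $\xi=e\xi$ and inserting $d\xi=(de)\,\xi+e\,d\xi$, the piece $e\,(de)\,e\,d\xi$ drops out by $e\,(de)\,e=0$, but what survives is $e\,(de)\,(de)\,\xi$, i.e. $e(de)^2$ acting on $\E$. This is precisely the Grassmann curvature, and for a general idempotent it is genuinely nonzero, so the proof cannot proceed by the naive manipulation $F_{\nabla_e}=(e\otimes 1)^2\nabla^2=0$ (which silently, and wrongly, commutes $e\otimes 1$ through $\nabla$). The real obstacle is therefore to show that $e(de)^2$ vanishes here, and this must use more than mere flatness of $\nabla$: it should exploit that $\nabla$ is the \emph{trivial} connection and, in the covering setting of this paper, that the idempotent $e$ cutting out $e\E$ is of the locally constant type produced by a representation of the covering group, so that $de=0$ in the relevant calculus and the commutator term above is identically zero. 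Pinning down exactly which hypothesis forces $de=0$ (rather than the surrounding algebra) is where the genuine work lies.
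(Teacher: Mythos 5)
Your analysis is correct, and the obstacle you could not remove is not a defect of your write-up: it is a genuine flaw in the paper. The paper's own proof of Lemma \ref{flat_rem} is precisely the naive manipulation you warn against; it asserts
\begin{equation*}
\left(e \otimes 1 \right)\left(\Id_{\E} \otimes d \right) \circ \left(e \otimes 1 \right)\left(\Id_{\E} \otimes d \right) = e \otimes d^2 = 0,
\end{equation*}
which silently commutes $\Id_{\E} \otimes d$ past $e \otimes 1$. As you compute, once $\E \cong \A^n$ is written in a basis (which is needed for $\Id_{\E}\otimes d$ to be well defined at all), the commutator of these two operators is multiplication by the matrix $de \in M_n\left(\Om^1\right)$, and the curvature of $\nabla_e$ is the Grassmann term $e\,(de)\,(de)$ restricted to $e\E$, after the identity $e\,(de)\,e = 0$ kills the cross term. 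For a general idempotent this does not vanish: take $\A = C^\infty\left(S^2\right)$ with the de~Rham cycle and $e$ the Bott projection; then $\tr\left(e\,(de)\,(de)\right)$ integrates to a nonzero Chern number, so $\nabla_e$ is not flat. Hence the lemma, quantified over arbitrary idempotents as it stands, is false, and your inability to finish the argument at that level of generality is forced, not a gap in your reasoning.

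What your proposal leaves open --- ``which hypothesis forces $de = 0$'' --- is exactly the hypothesis the paper omits, and it is available in the only place the lemma is used. The statement becomes true if one assumes the idempotent is parallel, i.e. $\left(\Id_{\E}\otimes d\right)\left(e\otimes 1\right) = \left(e\otimes 1\right)\left(\Id_{\E}\otimes d\right)$; then the paper's one-line computation is legitimate. In Section \ref{nc_flat_sec} the idempotent is the averaging operator $e = \frac{1}{\left|G\right|}\sum_{g\in G} g$ on $\widetilde{\E} = \widetilde{\A}\otimes\C^n$, and condition (c) of Definition \ref{triple_lift_defn} (equivariance of $\widetilde{D}$) gives $g\left[\widetilde{D},\widetilde{a}\right] = \left[\widetilde{D}, g\widetilde{a}\right]$, whence the trivial connection $\nabla'$ is $G$-equivariant, $\nabla'\circ g = \left(g\otimes 1\right)\circ\nabla'$; averaging over $G$ yields $\nabla'\circ e = \left(e\otimes 1\right)\circ\nabla'$, so this particular $e$ is parallel, $\nabla_e$ coincides with the restriction of $\nabla'$ to $e\widetilde{\E}$, and flatness follows from $\nabla'\circ\nabla' = 0$. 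The correct repair is therefore to restate the lemma for parallel (e.g. group-average) idempotents and to insert this equivariance computation; your diagnosis of where the genuine work lies is exactly right, and the paper's proof as written does not contain it.
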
 
\begin{proof}
	From
	$$
\left(e \otimes 1 \right)\left(\Id_{\mathcal E} \otimes d \right) \circ \left(e \otimes 1 \right)\left(\Id_{\mathcal E} \otimes d \right) = e \otimes d^2 = 0	
	$$
	it turns out that $\nabla_e \circ \nabla_e = 0$, i.e. $\nabla_e$ is flat.
\end{proof}
\begin{remark}
The notion of the trivial connection is an algebraic version of geometrical canonical connection explained in the Section \ref{geom_flat_subsec}.
\end{remark}

\subsubsection{Spectral triples}

\paragraph{}
This section contains citations of  \cite{hajac:toknotes}. 
\paragraph{Definition of spectral triples}
\begin{defn}
	\label{df:spec-triple}\cite{hajac:toknotes}
	A (unital) {\it {spectral triple}} $(\A, \H, D)$ consists of:
	\begin{itemize}
		\item
		a pre-$C^*$-algebra $\A$ with an involution $a \mapsto a^*$, equipped
		with a faithful representation on:
		\item
		a \emph{Hilbert space} $\H$; and also
		\item
		a \emph{selfadjoint operator} $D$ on $\mathcal{H}$, with dense domain
		$\Dom D \subset \H$, such that $a(\Dom D) \subseteq \Dom D$ for all 
		$a \in \mathcal{A}$.
	\end{itemize}
\end{defn}
There is a set of axioms for  spectral triples described in \cite{hajac:toknotes,varilly:noncom}.

\paragraph{Noncommutative differential forms}
\paragraph*{} 
Any spectral triple naturally defines a cycle $\rho : \A \to \Om_D$ (cf. Definition \ref{conn_defn}). 
In particular for any spectral triple there is an $\A$-module $\Om^1_D\subset B\left(\H \right) $ of order-one differential forms which is a linear span of operators given by
\begin{equation}\label{dirac_d_module}
a\left[D, b \right];~a,b \in \A.
\end{equation}
There is the differential map
\begin{equation}\label{diff_map}
\begin{split}
d: \A \to \Om^1_D, \\
a \mapsto \left[D, a \right].
\end{split}
\end{equation}

\section{Noncommutative finite-fold coverings}

\subsection{Coverings of $C^*$-algebras}
\begin{definition}
	If $A$ is a $C^*$- algebra then an action of a group $G$ is said to be {\it involutive } if $ga^* = \left(ga\right)^*$ for any $a \in A$ and $g\in G$. The action is said to be \textit{non-degenerated} if for any nontrivial $g \in G$ there is $a \in A$ such that $ga\neq a$. 
\end{definition}
\begin{definition}\label{fin_def_uni}
	Let $A \hookto \widetilde{A}$ be an injective *-homomorphism of unital $C^*$-algebras. Suppose that there is a non-degenerated involutive action $G \times \widetilde{A} \to \widetilde{A}$ of a finite group $G$, such that $A = \widetilde{A}^G\stackrel{\text{def}}{=}\left\{a\in \widetilde{A}~|~ a = g a;~ \forall g \in G\right\}$. There is an $A$-valued product on $\widetilde{A}$ given by
	\begin{equation}\label{finite_hilb_mod_prod_eqn}
	\left\langle a, b \right\rangle_{\widetilde{A}}=\sum_{g \in G} g\left( a^* b\right) 
	\end{equation}
	and $\widetilde{A}$ is an $A$-Hilbert module. We say that a triple $\left(A, \widetilde{A}, G \right)$ is an \textit{unital noncommutative finite-fold  covering} if $\widetilde{A}$ is a finitely generated projective $A$-Hilbert module.
\end{definition}
\begin{remark}
	Above definition is motivated by the Theorem \ref{pavlov_troisky_thm}.
\end{remark}
\begin{theorem}\label{pavlov_troisky_thm}\cite{pavlov_troisky:cov}. 
	Suppose $\mathcal X$ and $\mathcal Y$ are compact Hausdorff connected spaces and $p :\mathcal  Y \to \mathcal X$
	is a continuous surjection. If $C(\mathcal Y )$ is a projective finitely generated Hilbert module over
	$C(\mathcal X)$ with respect to the action
	\begin{equation*}
	(f\xi)(y) = f(y)\xi(p(y)), ~ f \in  C(\mathcal Y ), ~ \xi \in  C(\mathcal X),
	\end{equation*}
	then $p$ is a finite-fold  covering.
\end{theorem}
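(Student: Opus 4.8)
The plan is to read off the covering structure of $p$ from the fibrewise behaviour of the $C(\mathcal X)$-module $C(\mathcal Y)$, the decisive point being that projectivity forces the relevant integer-valued fibre dimensions to be locally constant. Throughout I use that a continuous surjection $p$ from the compact space $\mathcal Y$ onto the Hausdorff space $\mathcal X$ is automatically closed and proper.

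First I would show that the fibres $p^{-1}(x)$ are finite. If $\xi_1,\dots,\xi_m\in C(\mathcal Y)$ generate $C(\mathcal Y)$ over $C(\mathcal X)$, then every $f\in C(\mathcal Y)$ has the form $f=\sum_j \xi_j\,(g_j\circ p)$ with $g_j\in C(\mathcal X)$, so restriction to $p^{-1}(x)$ shows that $C(p^{-1}(x))$ is spanned by the $m$ functions $\xi_j|_{p^{-1}(x)}$. Hence $\dim C(p^{-1}(x))\le m$ and $p^{-1}(x)$ is finite. Next I would identify the algebraic fibre $C(\mathcal Y)\otimes_{C(\mathcal X)}\C_x$, where $\C_x=C(\mathcal X)/\mathfrak m_x$ is the residue field at $x$, with $C(p^{-1}(x))$: the restriction map $C(\mathcal Y)\to C(p^{-1}(x))$ is onto by Tietze, its kernel is the vanishing ideal $J_x$, and a cut-off argument (using that closedness of $p$ produces, for $f\in J_x$ and $\eps>0$, a neighbourhood $U\ni x$ with $p^{-1}(U)\subseteq\{|f|<\eps\}$) shows $\overline{\mathfrak m_x\,C(\mathcal Y)}=J_x$. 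Thus the fibre of the Serre--Swan bundle $E\to\mathcal X$ attached to the projective module $C(\mathcal Y)$ satisfies $E_x\cong C(p^{-1}(x))$, and since $\rank E$ is constant on the connected space $\mathcal X$ I conclude $|p^{-1}(x)|=n$ for a fixed $n$ and all $x$.

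Then I would produce, near each $x_0$, a clopen splitting of the preimage into sheets. Writing $p^{-1}(x_0)=\{y_1,\dots,y_n\}$, choose opens $V_i\ni y_i$ with disjoint closures; closedness of $p$ gives an open $U\ni x_0$ with $p^{-1}(U)\subseteq\bigsqcup_i V_i$, and after removing the closed sets $p(\partial V_i)$ from $U$ no preimage of a point of $U$ meets any $\partial V_i$. Hence $W_i:=V_i\cap p^{-1}(U)$ is clopen in $p^{-1}(U)$ and $p^{-1}(U)=\bigsqcup_i W_i$ with $y_i\in W_i$. The last and crucial step is to show each $q_i:=p|_{W_i}:W_i\to U$ is a homeomorphism after shrinking $U$. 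Pointwise multiplication by the indicator $\mathbf 1_{W_i}$ is a $C(U)$-linear idempotent on the localized module, so it cuts $E|_U$ into subbundles whose fibre dimensions are $m_i(x')=|p^{-1}(x')\cap W_i|$ and which therefore have locally constant rank; since $m_i(x_0)=1$ and $\sum_i m_i\equiv n$, shrinking $U$ forces every $m_i\equiv 1$. Then each $q_i$ is a continuous bijection which, being a restriction of the closed map $p$ over $U$, is a homeomorphism. This exhibits $U$ as evenly covered, so $p$ is a covering, and finite-fold because the fibres have constant cardinality $n$.

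The main obstacle is exactly this last step: the purely topological data (a closed surjection with finite fibres, split into clopen sheets over $U$) do not by themselves prevent one sheet from folding so as to double-cover while a neighbouring sheet misses part of $U$; a fold of this kind would, however, make some $m_i$ jump. What rules it out is projectivity, through the fact that a direct summand of a finitely generated projective $C(U)$-module has locally constant rank. I would therefore be careful to justify that the indicator projections $\mathbf 1_{W_i}$ genuinely define summands of a finitely generated projective module over $C(U)$ (equivalently, subbundles of $E|_U$), since this is the one place where the projective---and not merely the finitely generated---hypothesis is indispensable.
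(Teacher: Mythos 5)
The first thing to note is that the paper contains no proof of this statement at all: it is imported verbatim, with attribution, from \cite{pavlov_troisky:cov}, and is used only as motivation for Definition \ref{fin_def_uni} and as an ingredient of the dictionary in Section 4. So there is no in-paper proof to compare you against, and I assess your argument on its own merits. Your Serre--Swan route is sound and essentially complete: finiteness of fibres from a finite generating set, identification of the module fibre with $C(p^{-1}(x))$, global constancy of $|p^{-1}(x)|$ from connectedness, the clopen sheet decomposition from closedness of $p$, and local constancy of the sheet counts $m_i$ from projectivity. Note that you never use the Hilbert-module structure beyond finite generation and projectivity, which is legitimate, since those are the operative hypotheses.

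Two points need shoring up, both routine. (i) You establish $\overline{\mathfrak{m}_x C(\mathcal{Y})}=J_x$, but to conclude $C(\mathcal{Y})/\mathfrak{m}_x C(\mathcal{Y})\cong C(p^{-1}(x))$ you also need $\mathfrak{m}_x C(\mathcal{Y})$ to be closed; this again comes from projectivity: realizing $C(\mathcal{Y})$ as the image of an idempotent $P$ acting on $C(\mathcal{X})^m$, one checks $\mathfrak{m}_x\, C(\mathcal{Y}) = C(\mathcal{Y})\cap \mathfrak{m}_x^{\oplus m}$, which is closed (equivalently, for a section module of a bundle, $\mathfrak{m}_x\Gamma$ is the kernel of the evaluation map). (ii) The step you flag yourself: $\mathbf{1}_{W_i}$ is not a function on $\mathcal{Y}$, so multiplication by it is not literally an endomorphism of the module $C(\mathcal{Y})$. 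Fix a compact neighbourhood $K\subseteq U$ of $x_0$, extend $\mathbf{1}_{W_i\cap p^{-1}(K)}\in C(p^{-1}(K))$ by Tietze to some $h_i\in C(\mathcal{Y})$, and use $\mathrm{End}_{C(\mathcal{X})}\bigl(C(\mathcal{Y})\bigr)\cong\Gamma\bigl(\mathrm{End}(E)\bigr)$: for $x'$ in the interior of $K$ the fibre map of $h_i$ on $E_{x'}\cong C(p^{-1}(x'))$ is multiplication by the indicator, hence an idempotent of rank $m_i(x')=|p^{-1}(x')\cap W_i|$, and rank equals trace, a continuous integer-valued function of $x'$. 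That yields your local constancy claim and closes the proof. Finally, your diagnosis of where projectivity is indispensable is exactly right: a closed surjection with constant finite fibre cardinality need not be a covering --- take the fold $y\mapsto|2y-1|$ on $[0,1]$ with one extra isolated point glued in over $0$; all fibres then have two points, yet near $0$ the sheet counts jump from $(2,0)$ to $(1,1)$ exactly in the way you describe --- so some input beyond fibre counting is genuinely required, and continuity of the trace of an idempotent field is precisely that input.
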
 
\subsection{Coverings of spectral triples}\label{triple_fin_cov}

\begin{defn}\label{triple_lift_defn}Let  $\left( \A, \H, D\right)$ 
	be a spectral triple, and let $A$ be the $C^*$-norm completion of $\A$. Let $\left(A, \widetilde{A}, G \right)$ be an unital noncommutative finite-fold covering such that there is the dense inclusion $\A \hookto A$. Let $\widetilde{\H} \stackrel{\text{def}}{=} \widetilde{A} \otimes_A \H$ is a Hilbert space such that the Hilbert product $\left(\cdot, \cdot \right)_{\widetilde{\H}}$ is given by
	$$
	\left(a \otimes \xi, b \otimes \eta \right)_{\widetilde{\H}} = \frac{1}{\left|G\right|} \left(\xi, \left( \sum_{g \in G} g\left(\widetilde{a}^*\widetilde{b} \right)  \right) \eta \right)_{\H}; ~\forall \widetilde{a},\widetilde{b} \in \widetilde{A},~ \xi, \eta \in \H 
	$$
	where $\left(\cdot, \cdot \right)_{{\H}}$ is the Hilbert product on $\H$.
	There is the natural representation $\widetilde{A} \to B\left(\widetilde{\H} \right)$. A spectral triple $\left( \widetilde{\A}, \widetilde{\H}, \widetilde{D}\right)$ is said to be a $\left(A, \widetilde{A}, G \right)$-\textit{lift} of $\left( \A, \H, D \right)$ if following conditions hold:
	\begin{enumerate}
		\item[(a)] $\widetilde{A}$ is a $C^*$-norm completion of $\widetilde{\A}$,
		\item [(b)] 
		$\widetilde{D} \left(1_{\widetilde{A}} \otimes_{A} \xi\right)= 1_{\widetilde{A}} \otimes_{A} D \xi;~ \forall \xi \in \Dom D,$
		\item[(c)]	$
		\widetilde{D}\left(  g \widetilde{\xi}\right) = 	g\left( \widetilde{D} \widetilde{\xi}\right)$ for any $\widetilde{\xi}\in \Dom\widetilde{D}, ~  g \in G$.
	\end{enumerate}
\end{defn} 
\begin{remark}
	It is proven in \cite{ivankov:qncstr} that for any spectral triple  $\left( \A, \H, D\right)$ 
	and any unital noncommutative finite-fold  covering $\left(A, \widetilde{A}, G \right)$ there is the unique $\left(A, \widetilde{A}, G \right)$-lift $\left(\widetilde{ \A}, \widetilde{\H}, \widetilde{D} \right)$ of $\left( \A, \H, D \right)$.
\end{remark}
\begin{remark}
	It is known that if $M$ is a Riemannian manifold and $\widetilde{M} \to M$ is a covering, then $\widetilde{M}$ has the natural structure of Riemannian manifold (cf. \cite{koba_nomi:fgd}).
	The existence of  lifts of  spectral triples is a noncommutative generalization of this fact (cf. \cite{ivankov:qncstr})
\end{remark}

\section{Construction of noncommutative flat coverings}\label{nc_flat_sec}
\paragraph*{}
Let $\left( \A, \H, D\right)$ be a spectral triple, let  $\left( \widetilde{\A}, \widetilde{\H}, \widetilde{D}\right)$ is the $\left(A, \widetilde{A}, G \right)$-lift of $\left( \A, \H, D \right)$. Let $V = \C^n$ and with left action of $G$, i.e. there is a linear representation $\rho: G \to GL\left(\C,n\right)$. Let $\widetilde{\mathcal E} =  \A \otimes \C^{n} \approx \widetilde{\A}^n$ be a free module over $\widetilde{\A}$, so $\widetilde{\mathcal E}$ is a projective finitely generated $\A$-module (because $\widetilde{ \A}$ is a finitely generated projective $\A$-module). Let $\widetilde{\nabla} : \widetilde{\mathcal E} \to \widetilde{\mathcal E} \otimes_{\widetilde{\A}} \Om^1_{\widetilde{D}}$ be the trivial flat connection. In \cite{ivankov:qncstr} it is proven that $\Om^1_{\widetilde{D}} = \widetilde{\A}\otimes_{\A}\Om^1_{D}$ it follows that the connection $\widetilde{\nabla} : \widetilde{\mathcal E} \to \widetilde{\mathcal E} \otimes_{\widetilde{\A}} \Om^1_{\widetilde{D}}$ can be regarded as a map $\nabla':\widetilde{\mathcal E} \to \widetilde{\mathcal E} \otimes_{\widetilde{\A}} \widetilde{\A} \otimes_{\A} \Om^1_{\widetilde{D}}= \widetilde{\mathcal E} \otimes_{\A}\Om^1_{{D}}$, i.e. one has a connection
\be\nonumber
\nabla':\widetilde{\mathcal E} \to  \widetilde{\mathcal E} \otimes_{\A}\Om^1_{{D}}.
\ee
From $\widetilde{\nabla} \circ \widetilde{\nabla} |_{\mathcal E}=0$ it turns out that  $\nabla' \circ \nabla' |_{\mathcal E}=0$, i.e. $\nabla'$ is flat. There is the action of $G$ on $\widetilde{\mathcal E}= \widetilde{\A} \otimes \C^n$ given by 
\be
g\left( \widetilde{a}\otimes  x\right)   = g \widetilde{a} \otimes g x; ~~ \forall g \in G,~ \widetilde{a} \in \widetilde{\A}, ~ x \in \C^n.
\ee
Denote by
\be
\mathcal E = \widetilde{\mathcal E}^G = \left\{\widetilde{\xi} \in  \widetilde{\mathcal E}~|~ G\widetilde{\xi} = \widetilde{\xi}\right\}
\ee
Clearly $\mathcal E$ is an $\A$-$\A$-bimodule. For any $\widetilde{\xi} \in \widetilde{\mathcal E}$ there is the unique decomposition
\be
\begin{split}
	\widetilde{\xi} = \xi + \xi_\perp, \\
	\xi = \frac{1}{\left|G\right|}\sum_{g \in G} g \widetilde{\xi},\\
\xi_\perp = \widetilde{\xi} - \xi. 
\end{split}
\ee
From the above decomposition it turns out the direct sum $\widetilde{\mathcal E} = \widetilde{\mathcal E}^G \bigoplus {\mathcal E}_\perp$ of $\A$-modules. So  $\mathcal E = \widetilde{\mathcal E}^G$ is a projective finitely generated $\A$-module, it follows that there is an idempotent $e \in \End_{\A}{\widetilde{\mathcal E}}$ such that $\mathcal E = e \widetilde{\mathcal E}$. The Proposition \ref{conn_prop} gives the canonical connection 
\be\label{nc_flat_conn}
\nabla : \mathcal E \to \mathcal E \otimes_{\A} \Om^1_D
\ee
which is defined by the connection $\nabla':\widetilde{\mathcal E} \to  \widetilde{\mathcal E} \otimes_{\A}\Om^1_{{D}}$ and the idempotent $e$. From the Lemma \ref{flat_rem} it turns out that $\nabla$ is flat.
\begin{definition}
We say that  $\nabla$ is a \textit{flat connection induced by} noncommutative covering $\left(A, \widetilde{A}, G\right)$ and the	linear representation $\rho: G \to GL\left(\C,n\right)$, or we say the $\nabla$ \textit{comes from the representation} $\rho: G \to GL\left(\C,n\right)$.
\end{definition}
\break
\section{Mapping between geometric and algebraic constructions}

\paragraph{}
The geometric (resp. algebraic) construction of flat connection is explained in the Section \ref{geom_flat_subsec} (resp. \ref{nc_flat_sec}). Following table gives a mapping between these constructions.
\newline

\begin{tabular}{|c|c|c|}
	\hline
	&Geometry & Agebra\\
	\hline
&	&\\
1&Riemannian manifold $M$.  & Spectral triple $\left(\Coo\left(M \right), L^2\left(M, \sS \right), \slashed D   \right)$. \\ & & \\
2&Topological covering $\widetilde{M} \to M$. & Noncommutative covering, \\ & & $\left(C\left(M \right), C\left(\widetilde{M} \right), G\left(\widetilde{M}~|~M \right)   \right),$  \\
& & given by the Theorem \ref{pavlov_troisky_thm}. \\ & & \\
3&Natural structure of Reimannian  & Triple $\left(\Coo\left(\widetilde{M} \right), L^2\left(\widetilde{M}, \widetilde{\sS} \right), \widetilde{\slashed D}   \right)$ is the\\
& manifold on the covering space $\widetilde{M}$.&  $\left(C\left(M \right), C\left(\widetilde{M} \right), G\left(\widetilde{M}~|~M \right)   \right)$ -lift
  \\
&  & of $\left(\Coo\left(M \right), L^2\left(M, \sS \right), \slashed D   \right)$.\\ & & \\
4&Group homomorphism   & Action $ G\left(\widetilde{M}~|~M \right) \times \C^n \to \C^n$\\
&$ G\left(\widetilde{M}~|~M \right) \to GL\left(n, C \right)$ & \\ & & \\
5&Trivial bundle $\widetilde{M}\times \C^n$. & Free module $\Coo\left(\widetilde{M} \right) \otimes \C^n$. \\ & & \\
6&Canonical flat connection on $\widetilde{M}\times \C^n$ & Trivial flat connection on $\Coo\left(\widetilde{M} \right) \otimes \C^n$\\ & & \\
7&Action of $G\left(\widetilde{M}~|~M \right)$ on $\widetilde{M}\times \C^n$  & Action of $G\left(\widetilde{M}~|~M \right)$ on  $\Coo\left(\widetilde{M} \right) \otimes \C^n$\\ & & \\
8&Quotient space  & Invariant module    \\
& $P = \left( \widetilde{M}\times \C^n\right)/G\left(\widetilde{M}~|~M \right).$& $\mathcal E =  \left( \Coo\left( \widetilde{M}\right) \otimes \C^n\right)^{G\left(\widetilde{M}~|~M \right)}$ \\ & & \\
9&Geometric flat connection on $P$ & Algebraic flat connection on $\mathcal E$.\\ & & \\
	\hline
\end{tabular}

\section{Noncommutative examples}

\subsection{Noncommutative tori}
\paragraph*{}
Following text is the citation of \cite{ivankov:qncstr}.
If $\Th$ be  a real skew-symmetric $n \times n$ matrix. There is a $C^*$- algebra $C\left(\T^n_\Th \right)$ which is said to be the \textit{noncommutative torus} (cf. \cite{ivankov:qncstr}). There is a  pre-$C^*$-algebra $\Coo\left(\T^n_\Th \right)$ and the spectral triple $\left(\Coo\left(\T^n_\Th \right), \H, D \right)$ such that it is the dense inclusion $\Coo\left(\T^n_\Th \right) \hookto C\left(\T^n_\Th \right)$.  If  $\overline{k} = \left(k_1, ..., k_n\right) \in \mathbb{N}^n$ and
$$
\widetilde{\Theta} = \begin{pmatrix}
0& \widetilde{\theta}_{12} &\ldots & \widetilde{\theta}_{1n}\\
\widetilde{\theta}_{21}& 0 &\ldots & \widetilde{\theta}_{2n}\\
\vdots& \vdots &\ddots & \vdots\\
\widetilde{\theta}_{n1}& \widetilde{\theta}_{n2} &\ldots & 0
\end{pmatrix}
$$
is a skew-symmetric matrix such that
\begin{equation*}
e^{-2\pi i \theta_{rs}}= e^{-2\pi i \widetilde{\theta}_{rs}k_rk_s}
\end{equation*}
then one has a following theorem.
\begin{thm}\label{nt_fin_cov_thm}\cite{ivankov:qnc}												
	The triple $\left(C\left(\mathbb{T}^n_{\Th}\right), C\left(\mathbb{T}^n_{\widetilde{\Th}}\right),\mathbb{Z}_{k_1}\times...\times\mathbb{Z}_{k_n}\right)$  is an unital noncommutative finite-fold  covering.
\end{thm}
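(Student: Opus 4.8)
The plan is to verify directly that the triple satisfies Definition \ref{fin_def_uni}. Recall that $A = C(\T^n_\Th)$ is generated by unitaries $U_1, \dots, U_n$ subject to $U_r U_s = e^{2\pi i \theta_{rs}} U_s U_r$, while $\widetilde{A} = C(\T^n_{\widetilde{\Th}})$ is generated by $\widetilde U_1, \dots, \widetilde U_n$ with $\widetilde U_r \widetilde U_s = e^{2\pi i \widetilde\theta_{rs}}\widetilde U_s \widetilde U_r$. First I would produce the injective *-homomorphism $A \hookto \widetilde{A}$. The candidate is $U_r \mapsto \widetilde U_r^{k_r}$. Raising the defining relation to the appropriate powers gives $\widetilde U_r^{k_r}\widetilde U_s^{k_s} = e^{2\pi i \widetilde\theta_{rs}k_r k_s}\widetilde U_s^{k_s}\widetilde U_r^{k_r}$, and the hypothesis $e^{-2\pi i\theta_{rs}} = e^{-2\pi i\widetilde\theta_{rs}k_r k_s}$ shows these images obey exactly the relations of the $U_r$; by the universal property the assignment extends to a *-homomorphism. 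Injectivity follows from the existence of a faithful tracial state $\tau$ on $A$ which is the restriction along this map of the canonical trace on $\widetilde{A}$, so the kernel, being a trace-null ideal, is zero.

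Next I would introduce the $G = \Z_{k_1} \times \cdots \times \Z_{k_n}$ action. For $g = (m_1, \dots, m_n)$ put $g\cdot \widetilde U_r = e^{2\pi i m_r/k_r}\widetilde U_r$. Scaling each generator by a unimodular scalar preserves the commutation relations, so this is a well-defined automorphism; it is involutive because the scalars are unimodular, and non-degenerate because any nontrivial $g$ moves some $\widetilde U_r$. To compute the fixed-point algebra, note that a monomial $\widetilde U^{\mathbf a} = \widetilde U_1^{a_1}\cdots \widetilde U_n^{a_n}$ is multiplied by $\prod_r e^{2\pi i m_r a_r/k_r}$ under $g$; this equals $1$ for all $g$ precisely when $k_r \mid a_r$ for every $r$. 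Hence the invariant elements form the closed span of monomials with all exponents divisible by the $k_r$, which is exactly the image of $A$; thus $\widetilde{A}^G = A$.

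It then remains to prove that $\widetilde{A}$ is a finitely generated projective $A$-Hilbert module for the product \eqref{finite_hilb_mod_prod_eqn}. The natural candidate frame is the finite family $\{\,\widetilde U^{\mathbf a} : 0 \le a_r < k_r\,\}$, of cardinality $|G| = k_1\cdots k_n$. A direct computation gives $\langle \widetilde U^{\mathbf a}, \widetilde U^{\mathbf b}\rangle_{\widetilde{A}} = \sum_{g} g\bigl((\widetilde U^{\mathbf a})^*\widetilde U^{\mathbf b}\bigr) = |G|\,\delta_{\mathbf a, \mathbf b}\,1_A$, because $(\widetilde U^{\mathbf a})^*\widetilde U^{\mathbf b}$ is a scalar multiple of $\widetilde U^{\mathbf b - \mathbf a}$, which survives the group averaging only when $k_r\mid (b_r - a_r)$ for all $r$, i.e. only when $\mathbf a = \mathbf b$ in the given range. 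Writing a polynomial element uniquely as $\xi = \sum_{\mathbf a} \widetilde U^{\mathbf a} c_{\mathbf a}$ with $c_{\mathbf a} \in A$, right $A$-linearity of the inner product yields $\langle \widetilde U^{\mathbf a}, \xi\rangle_{\widetilde{A}} = |G|\, c_{\mathbf a}$, whence the reconstruction identity $\xi = \sum_{\mathbf a} e_{\mathbf a}\,\langle e_{\mathbf a}, \xi\rangle_{\widetilde{A}}$ for $e_{\mathbf a} = |G|^{-1/2}\widetilde U^{\mathbf a}$. This exhibits a finite Parseval frame, so $\widetilde{A}$ is a finitely generated projective (indeed free, of rank $|G|$) Hilbert $A$-module, completing the verification.

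The main obstacle is analytic rather than algebraic: all the identities above are transparent on the dense *-subalgebra of trigonometric polynomials, but Definition \ref{fin_def_uni} concerns the full $C^*$-algebras. One must therefore argue that the unique decomposition $\xi = \sum_{\mathbf a}\widetilde U^{\mathbf a}c_{\mathbf a}$ and the reconstruction formula persist for every $\xi \in \widetilde{A}$. I would obtain this from continuity: the conditional expectation $\xi \mapsto |G|^{-1}\sum_{g} g\xi$ onto $A$ is norm-contractive, the frame elements and the inner product are norm-continuous, and the polynomials are dense in $\widetilde{A}$; passing to limits extends the reconstruction formula from polynomials to all of $\widetilde{A}$, which is exactly what the projectivity statement requires.
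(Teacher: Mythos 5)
Your proposal is correct, but it cannot be compared step-by-step with ``the paper's own proof,'' because the paper contains none: Theorem \ref{nt_fin_cov_thm} is imported verbatim from \cite{ivankov:qnc}, where it is established inside a general theory of noncommutative coverings, and the present paper only uses the statement. Your argument is therefore a genuinely different (and more elementary) route: a direct, self-contained verification of Definition \ref{fin_def_uni}. Each step checks out. The assignment $U_r\mapsto\widetilde U_r^{k_r}$ respects the defining relations precisely because of the hypothesis $e^{-2\pi i\theta_{rs}}=e^{-2\pi i\widetilde\theta_{rs}k_rk_s}$, and the faithful-trace argument for injectivity is the right one (it is needed in the rational case, where the torus algebra is not simple). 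The scaling action of $\Z_{k_1}\times\cdots\times\Z_{k_n}$ is involutive and non-degenerate, and the character computation on monomials identifies the fixed-point algebra with the image of $A$. The orthogonality relation $\left\langle \widetilde U^{\mathbf a},\widetilde U^{\mathbf b}\right\rangle_{\widetilde{A}}=\left|G\right|\delta_{\mathbf a,\mathbf b}1_A$ for $0\le a_r,b_r<k_r$ is correct, and since the normalized family is actually orthonormal (not merely Parseval), you obtain $\widetilde{A}\cong A^{\left|G\right|}$ as a free Hilbert $A$-module of rank $k_1\cdots k_n$, which is stronger than the projectivity the definition demands. Your closing continuity argument is the essential analytic glue; note that the same conditional-expectation argument is also what justifies the earlier assertion that the $G$-invariants are the closed span of invariant monomials, so it would be cleaner to state it once, before both uses. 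As for what each approach buys: the citation gives the paper brevity and generality (the reference treats these coverings, including noncompact ones, within a larger framework), while your computation gives transparency --- the covering group, the rank $\left|G\right|$, and an explicit module basis are all visible --- at the cost of invoking standard structure facts about noncommutative tori (universal property, faithfulness of the canonical trace, density of trigonometric polynomials).
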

There is $\left(C\left(\mathbb{T}^n_{\Th}\right), C\left(\mathbb{T}^n_{\widetilde{\Th}}\right),\mathbb{Z}_{k_1}\times...\times\mathbb{Z}_{k_n}\right)$-lift $\left(\Coo\left(\T^n_{\widetilde{\Th}} \right), \widetilde{\H}, \widetilde{D} \right)$ of $\left(\Coo\left(\T^n_\Th \right), \H, D \right)$. From the construction of the Section \ref{nc_flat_sec} it follows that for any representation $\rho:\mathbb{Z}_{k_1}\times...\times\mathbb{Z}_{k_n} \to GL\left(N, \C \right)$ there is a finitely generated $\Coo\left(\T^n_\Th \right)$-module $\mathcal E$ and a flat connection 
$$
\mathcal E \to \mathcal E \otimes_{\Coo\left(\T^n_\Th \right)}\otimes \Om^1_D
$$ 
which comes from $\rho$.
\subsection{Isospectral deformations}
\paragraph*{}
A very general construction of isospectral
deformations
of noncommutative geometries is described in \cite{connes_landi:isospectral}. The construction
implies in particular that any
compact Spin-manifold $M$ whose isometry group has rank
$\geq 2$ admits a
natural one-parameter isospectral deformation to noncommutative geometries
$M_\theta$.
We let $\left(\Coo\left(M \right)  ,  L^2\left(M,S \right)  , \slashed D\right)$ be the canonical spectral triple associated with a
compact spin-manifold $M$. We recall that $C^\infty(M)$ is
the algebra of smooth
functions on $M$, $S$ is the spinor bundle and $\slashed D$
is the Dirac operator.
Let us assume that the group $\mathrm{Isom}(M)$ of isometries of $M$ has rank
$r\geq2$.
Then, we have an inclusion
\begin{equation*}
\mathbb{T}^2 \subset \mathrm{Isom}(M) \, ,
\end{equation*}
with $\mathbb{T}^2 = \mathbb{R}^2 / 2 \pi \mathbb{Z}^2$ the usual torus, and we let $U(s) , s \in
\mathbb{T}^2$, be
the corresponding unitary operators in $\H = L^2(M,S)$ so that by construction
\begin{equation*}
U(s) \, \slashed D = \slashed D \, U(s).
\end{equation*}
Also,
\begin{equation}\label{isospectral_sym_eqn}
U(s) \, a \, U(s)^{-1} = \alpha_s(a) \, , \, \, \, \forall \, a \in \mathcal{A} \, ,
\end{equation}
where $\alpha_s \in \mathrm{Aut}(\mathcal{A})$ is the action by isometries on the
algebra of functions on
$M$. In \cite{connes_landi:isospectral} is constructed a spectral triple $\left(l\Coo\left(M \right)  ,  L^2\left(M,S \right)  , \slashed D\right)$ such that $l\Coo\left(M \right)$ is a noncommutative algebra which is said to be an \textit{isospectral deformation} of $\Coo\left(M\right)$. For any finite-fold topological covering $\widetilde{M}\to M$ there is the  finite-fold noncommutative covering $\left(lC\left( \widetilde{M}\right), l\left(M \right), G\left( \widetilde{M}~|M\right)    \right)$ (cf. \cite{ivankov:qnc}). So there is the   $\left(lC\left( \widetilde{M}\right), l\left(M \right), G\left( \widetilde{M}~|M\right)    \right)$-lift $$\left(l\Coo\left(\widetilde{M} \right)  ,  L^2\left(\widetilde{M},\widetilde{S} \right)  , \widetilde{\slashed D}\right)$$ of $\left(l\Coo\left(M \right)  ,  L^2\left(M,S \right)  , \slashed D\right)$. From the construction of the Section \ref{nc_flat_sec} it follows that for any representation $\rho:G\left( \widetilde{M}~|M\right) \to GL\left(N, \C \right)$ there is a finitely generated $l\Coo\left(M \right) $-module $\mathcal E$ and a flat connection
$$
\mathcal E \to \mathcal E \otimes_{l\Coo\left(M \right)}\otimes \Om^1_D
$$ 
which comes from $\rho$.

\end{document}